\documentclass{amsart}
\usepackage{latexsym}
\usepackage{amsmath,amsfonts,epsfig, graphics, graphicx, amsthm, amssymb}
\input{xy}
\xyoption{all}

\newtheorem{theorem}{Theorem}[section]
\newtheorem{lemma}[theorem]{Lemma}

\newtheorem*{varthm1}{Main Theorem}
\newtheorem*{Ko}{Korselt's Criterion}

\newtheorem{conjtd}{Conjecture}

\newtheorem{ques}{Question}

\begin{document}

\title{Carmichael numbers and least common multiples of $p-1$}
\author{Thomas Wright}
\address{429 N. Church St.\\Spartanburg, SC 29302\\USA}
\maketitle

\begin{abstract}
For a Carmichael number $n$ with prime factors $p_1,\cdots,p_m$, define
$$K=GCD[p_1-1,\cdots,p_m-1],$$
and let $C_\nu(X)$ denote the number of Carmichael numbers up to $X$ such that $K=\nu$.   Assuming a strong conjecture on the first prime in an arithmetic progression, we prove that for any even natural number $\nu$,
$$C_\nu(X)\geq X^{1-(2+o(1))\frac{\log\log\log \log X}{\log\log\log X}}.$$
This is a departure from standard constructions of Carmichael numbers, which generally require $K$ to grow along with $n$.
\end{abstract}

\section{Introduction}

A Carmichael number is a composite integer $n$ such that
$$a^n\equiv a\pmod n$$
for every integer $a$.

While the first Carmichael numbers were discovered over a century ago \cite{Ca}, \cite{Si}, a proof that the set of Carmichael numbers is infinite appeared more recently in 1994 \cite{AGP}.  In that proof, the authors raised a number of further questions, one of which is the following:

\begin{ques}
For any prime $P$, are there infinitely many Carmichael numbers $n$ for which $P|n$?
\end{ques}

Unlike many of the other problems raised in that paper, which have been either resolved completely \cite{La}, partially \cite{WrE}, or at least conditionally \cite{Ch}, \cite{WrV}, this one has seen little progress.

%Of all the unsolved questions about Carmichael numbers raised in \cite{AGP}, this one may be the hardest.  While several of the questions raised in that paper have either been resolved \cite{La}, partially resolved \cite{WrE}, or at least conditionally resolved \cite{Ch}, \cite{WrV}, this one has had little progress.

The difficulty here is that the construction in \cite{AGP} requires that all of the primes $p|n$ be such that the $p-1$'s share a large common factor $k$.   Importantly, $k$ must increase as $n$ grows, which means that this method does not allow us to find infinitely many $n$ divisible by a fixed prime $P$.  In fact, if we define
$$K=GCD[p_1-1,\cdots,p_m-1],$$
even the simpler question of finding (unconditionally) infinitely many $n$ for which $K$ is bounded by some constant appears out of reach with the \cite{AGP} construction.

More specifically, let $\lambda(n)$ denote as usual the Carmichael lambda function, defined to be the smallest integer such that
$$a^{\lambda(n)}\equiv 1\pmod n$$
for any $a$ relatively prime to $n$.   The standard construction for Carmichael numbers first creates an $L$ for which $\lambda(L)$ is much smaller than $L$ itself.  From here, one looks for a $k$ such that the set
$$\mathcal P_k=\{p\mbox{ prime}:p=dk+1,d|L,(k,L)=1\}$$
is large.  If there are enough such primes for a given choice of $k$, one can use combinatorial results to find a subset of the primes in $\mathcal P_k$ that multiply to a Carmichael number $n$.  Unfortunately, finding primes of the form $dk+1$ requires results about primes in arithmetic progressions, and these results do not apply unless $p>d^\frac{12}{5}$ \cite{Hu}; in other words, the construction requires that $k>p^\frac{7}{12}$. This required commonality between the prime factors is a clear obstruction to the discovery of Carmichael numbers that are multiples of a fixed prime factor.  After all, if $k\geq P$ for a given $P$ then it is impossible for $P$ to be an element of $\mathcal P_k$, and hence $P$ cannot be a factor of our constructed Carmichael number.

In fact, even if one were to construct these sets $\mathcal P_k$ by assuming the heuristically best possible conjectures about primes in arithmetic progressions (i.e. Conjecture 1 below), one would still require $k\gg \log^2 p$, which, while being an improvement, still goes to infinity as $n$ does the same.

% , all of the prime factors $p$ of a Carmichael number $n$ are such that $k|p-1$ for some fixed $k$.  This $k$ is fairly large relative to the prime $p$; by the construction in \cite{AGP}, $k$ is roughly of size $p^\frac{7}{12}$.  Defining $$K=GCD[p_1-1,\cdots,p_m-1],$$
%this means that $K\geq k>p^{\frac{7}{12}}$.

%Since $k$ is a parameter that becomes large as the size of the Carmichael number also becomes large,

As such, it would seem that an important first step toward a resolution of Question 1 would be to show that $K$ need not go to infinity as $n$ grows large.   In this paper, we find that this can indeed be shown under the assumption of a very strong conjecture on the first prime in an arithmetic progression.  A version of this conjecture was first formulated by Heath-Brown in 1978:

\begin{conjtd}
There exists an $A\geq 2$ such that if $(b,l)=1$ then there exists a prime $p\equiv b\pmod l$ with $$p\ll l\left(\log l\right)^A.$$
%In other words, if $l$ is sufficiently large then there exists a $p\equiv b\pmod l$ with
%$$p<l\log^3 l.$$
\end{conjtd}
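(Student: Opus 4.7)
The plan would be to analyze the prime-counting function in arithmetic progressions, $\psi(x;l,b) = \sum_{n \leq x,\, n \equiv b \pmod l} \Lambda(n)$, and show that it is positive whenever $x \gg l(\log l)^A$. The natural tool is the explicit formula, which expresses $\psi(x;l,b)$ in terms of the nontrivial zeros of the Dirichlet $L$-functions $L(s,\chi)$ attached to characters $\chi$ modulo $l$. Positivity then reduces to bounding the zero-contribution so that the main term $x/\varphi(l)$ dominates.

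First I would establish a log-free zero-density estimate of the shape
\[
\sum_{\chi \pmod l} N(\sigma,T,\chi) \ll (lT)^{c(1-\sigma)},
\]
valid in the region $\tfrac{1}{2} \leq \sigma \leq 1$ with $|\Im s| \leq T$. Combined with a Vinogradov--Korobov zero-free region, such an estimate yields Linnik's theorem $p(l,b) \ll l^L$; the current unconditional record (due to Xylouris) is $L \leq 5$, and this represents the best known framework for attacking the problem.

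The crux of the argument would be to bridge the vast gap between Linnik's theorem, which gives a power of $l$, and the conjectured polylogarithmic bound. This requires a qualitative sharpening of the exceptional-zero analysis: in effect, one must rule out Landau--Siegel zeros, or at minimum show that any exceptional real zero $\beta$ of $L(s,\chi)$ satisfies $1 - \beta \gg (\log l)^{1-A}$. Even the Generalized Riemann Hypothesis only yields $p(l,b) \ll l^2(\log l)^2$, which falls well short of the target, so GRH by itself does not suffice; additional input---perhaps from high-moment estimates for $L$-functions on the critical line, or a strong density hypothesis for zeros near the $1$-line---would be needed to compress the bound down to polylogarithmic.

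The main obstacle, then, is precisely the Siegel-zero phenomenon, which has resisted the standard analytic machinery for nearly a century. A complete proof of Conjecture 1 would constitute a landmark result well beyond the scope of this paper, which is why the strategy here is to assume the conjecture as a hypothesis rather than to establish it, and then to explore its consequences for the distribution of Carmichael numbers with prescribed gcd structure.
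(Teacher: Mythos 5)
This statement is Conjecture 1, which the paper does not prove but explicitly assumes as a hypothesis (noting it is far stronger than anything known, with even GRH falling short), and your proposal correctly recognizes exactly that: you accurately describe the state of the art (Linnik-type zero-density arguments, Xylouris's exponent, the $l^2(\log l)^2$ bound under GRH, and the Siegel-zero obstruction) and conclude, as the paper does, that the conjecture must be taken as an assumption rather than established. So your treatment matches the paper's; there is no proof to compare against, and none should be expected.
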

So as to avoid $\gg$ notation, we will say that there exists an $A$ such that, for $l$ sufficiently large, there exists a prime $p\equiv b\pmod l$ with
\begin{gather}\label{conjineq}p<l\left(\log  l\right)^A.\end{gather}

This is a conjecture that has been frequently invoked in Carmichael-related papers - see e.g. \cite{BP}, \cite{EPT}, \cite{WrD}, \cite{WrV}.  The full version of the conjecture as stated by Heath-Brown claims that this bound should for every $A\geq 2$.  It is not expected that this conjecture should hold for $A<2$; indeed, Granville and Pomerance have conjectured that the first prime $p\equiv b\pmod l$ should be $\gg \phi(l) (\log l)^2$ for infinitely many choices of $l$ (see \cite{GPprime}, page 2).  In our paper, however, we only require that some such $A$ exists.

%this can be seen as a congruence class analogue for Cramer's conjecture on prime gaps, which states that the difference between consecutive primes should be $\ll \log^2 x$ (see e.g. .

Define $C(X)$ to be the number of Carmichael numbers up to $X$, and let $C_\nu(X)$ denote the number of Carmichael numbers up to $X$ for which $K=\nu$.  In this paper, we prove the following:
\begin{varthm1}
Assume Conjecture 1 holds.  Then for any even $\nu$,
$$C_\nu(X)\geq X^{1-(2+o(1))\frac{\log\log \log \log X}{\log \log\log X}}.$$
\end{varthm1}
This is the same lower bound found in \cite{WrD} for the original quantity $C(X)$, and it is close to best possible.  Pomerance \cite{Po} proved that
$$C(X)\leq X^{1-\frac{\log\log \log X}{2\log\log X}}$$
for sufficiently large $X$, and he subsequently conjectured that
$$C(X)\gg X^{1-\frac{\log\log \log X}{\log\log X}}.$$
This would suggest that, while modern construction methods for Carmichael numbers require ever-increasing $K$, the density of Carmichael numbers with bounded $K$ should be relatively close to the number of Carmichael numbers themselves.
\section{Construction Methods}

We begin by stating the well-known necessary and sufficient condition for Carmichael numbers, which Korselt discovered in 1899 \cite{Ko}:

%Proofs of existence or infinitude of Carmichael numbers generally require Korselt's Criterion, which Korselt discovered in 1899 \cite{Ko}:

\begin{Ko} \textit{A positive composite integer $n$ is a Carmichael number if and only if $n$ is squarefree and $p-1|n-1$.}\end{Ko}

Nearly every modern effort involving Carmichael numbers follows the framework of \cite{AGP}, which depends heavily upon this criterion; we describe that framework here.  Let $P(y)$ denote the largest prime factor of $y$, and let $\lambda$ denote the Carmichael lambda function.  First, the authors of that paper find a large set of primes $\mathcal Q$ such that for any $q\in \mathcal Q$, $P(q-1)<q^{1-E}$ for some $0<E<1$.  The primes in $\mathcal Q$ are then multiplied together to form
$$L=\prod_{q\in \mathcal Q}q.$$
Because the $q-1$ are smooth relative to $q$, it can be shown that $\lambda(L)$ is small relative to $L$.

Next, the authors define
$$\mathcal P_k=\{p:p=dk+1:d|L,d\leq x^B,(L,k)=1\}$$
for a constant $B<1$.

Using results about primes in arithmetic progressions, one can show that there exists a $k_0\geq x^{1-B}$ such that $\mathcal P_{k_0}$ is large if $B<\frac{5}{12}$.  Using a combinatorial theorem of van Emde Boas and Kruyswijk \cite{EK} and Meshulam \cite{Me}, it can then be shown that there are many subsets $\{p_1,\cdots,p_m\}\subset \mathcal P_{k_0}$ such that
$$n=p_1\cdots p_m\equiv 1\pmod L.$$
Clearly, $n$ is also 1 mod $k_0$, since $n$ is the product of primes that are 1 mod $k_0$.  So for any $p|n$,
$$p-1=dk_0|Lk_0|n-1.$$
Hence, $n$ is a Carmichael number.

Here, we alter the framework in a way that is somewhat similar to \cite{WrD} and \cite{WrF}.  One of the key ideas in those two papers was to change the way we construct $\mathcal Q$ so as to make $\lambda(L)$ even smaller relative to $L$.  In particular, the method used to construct our primes $p$ can also be used to construct our primes $q$.  Let
$$J=\prod_{\substack{\frac z2\leq r\leq z, \\ r\mbox{ prime}}}r,$$
and define
$$\mathcal R_j=\{q\mbox{ prime}:q=gj+1,g|J,\omega(g)=\lfloor \log z\rfloor\}.$$
Just as before, we can find a $j_0$ for which $\mathcal R_{j_0}$ is relatively large.  Here, the primes $q\in \mathcal R_{j_0}$ are such that $q-1|Jj_0$.  Letting $\mathcal Q=\mathcal R_{j_0}$ for some set $\mathcal R_{j_0}$ with many primes, we define $L$ as before and find that $\lambda(L)|Jj_0$ as well.  Since this $\lambda(L)$ is very small relative to $L$, we can use much smaller sets of primes $\mathcal P_k$ to find a subset whose product is 1 modulo $L$.

The major change that we make here is that we create two different (and disjoint) sets $\mathcal Q_1$ and $\mathcal Q_2$.  We then create an analogous $L_1$ and $L_2$ and prime sets $\mathcal P_{k_1}$ and $\mathcal P_{k_2}$, constructed in such a way that $p_1=d_1k_1\nu+1$ and $(p_1-1,L_2k_2)=1$ for $p_1\in \mathcal P_{k_1}$ and vice-versa for $p_2\in \mathcal P_{k_2}$.  Since the $k_i$ are small (as a result of both the construction and the conjecture), it is possible to find sets of primes in $\mathcal P_{k_1}$ that multiply to 1 mod $k_2L_1L_2$ and sets primes in $\mathcal P_{k_2}$ that multiply to 1 mod $k_1L_1L_2$.  From the set $\mathcal P_{k_1}$, then, we create a product $n_1$ comprised of primes in this set such that $n_1\equiv 1\pmod{L_1L_2k_1k_2\nu}$; we do the same to find an $n_2$ from $\mathcal P_{k_2}$ such that $n_2\equiv 1\pmod{L_1L_2k_1k_2\nu}$.  Letting $n=n_1n_2$, we find that $n$ is a Carmichael number with $K=\nu$.

Importantly, we require Conjecture 1 in order to guarantee that $k_1$ and $k_2$ are small.  If, say, $k_2$ were of size $p^{\frac{7}{12}}$ as in \cite{AGP}, or even if $k_2$ were of size $p^\epsilon$ for some small constant $\epsilon$, we would not be able to find enough primes in $\mathcal P_{k_1}$ to guarantee that some subset of them would multiply to 1 modulo $k_2$ (or primes in $\mathcal P_{k_2}$ that multiply to 1 modulo $k_1$).  One could actually weaken the conjecture somewhat and still prove this result - letting $A=\log\log z$ would still allow the result to be proven - however, we use the requirement that $A$ be a constant to simplify the exposition.

We also note that in most cases below (e.g. lower bounds for $R_{j}$ and $\mathcal P_{k_i}$ and upper bounds for $\lambda(L)$ and $L$), the bounds here are not close to sharp and can certainly be improved.  However, such improvements would have no effect on the main term of the Main Theorem; indeed, sharpening these bounds to best possible would only affect the $o(1)$-term.  Hence, we content ourselves with the loose bounds below.

\section{Constructing $L_i$}

In \cite{AGP}, the authors find a large set of primes $q$ which will eventually divide $p-1$.  In particular, these $q$'s are chosen such that $q-1$ is fairly smooth; hence, when the authors let $L$ be the product of these $q$'s, they are left with an $L$ for which $\lambda(L)$ is small.  Since we are assuming the conjecture, however, we can find $q$'s for which $q-1$ is very smooth; this will allow us to construct an $L$ for which $\lambda(L)$ is even smaller.  As noted above, this construction was previously used in \cite{WrD} and \cite{WrF}.

First, we construct our $L_i$.  As described above, we let
$$J=\prod_{\substack{\frac z2\leq r\leq z, \\ r\mbox{ prime}}}r,$$
where $z$ is a parameter that is large enough for (\ref{conjineq}) to hold for any $l\geq \frac z2$.

%, and recall that
We then consider primes of the form $gj+1$ for $g|J$.  Define as before the set
$$\mathcal R_j=\{q\mbox{ prime}:q=gj+1,g|J,\omega(g)=\lfloor \log z\rfloor\}.$$
Note that for any prime in $\mathcal R_j$,
\begin{gather}\label{gbound}
g\leq z^{\log z},
\end{gather}
and hence
$$\left(\log  g\right)^A\leq \left(\log z\right)^{2A}.$$
So we can invoke the conjecture to find that
$$\sum_{j=1}^{\left(\log z\right)^{2A}}|\mathcal R_j|\geq \#\{g|J:\omega(g)=\lfloor \log z\rfloor\},$$
since each choice of $g$ must yield at least one $q$ for $j$ in this range.  Since $j<\frac z2$ and any prime divisor of $g$ is $\geq \frac z2$, we know that $(j,g)=1$ for any $g$.  So any prime $q$ can only appear in at most one set $\mathcal R_j$, and hence the $\mathcal R_j$ are pairwise disjoint.

Now, by the standard combinatorial identity that
\begin{gather}\label{combid}
\left(\begin{array}{c} n\\k\end{array}\right)\geq \left(\frac nk\right)^k,
\end{gather}
we know that
\begin{align*}
\#\{g|J:\omega(g)=\lfloor \log z\rfloor\}\geq &\left(\begin{array}{c}\frac{z}{4\log z} \\ \lfloor \log z\rfloor \end{array}\right)>\left(\frac{z}{5\log^2 z}\right)^{\log z-1}\\
\geq &\left(\frac{z}{5\log^2 z}\right)^{\log z}\left(\frac 1z\right)>\left(\frac{z}{15\log^2 z}\right)^{\log z},
\end{align*}
since $3^{\log z}>z$.
So there must exist a $j_0\leq \left(\log z\right)^{2A}$ such that
$$|\mathcal R_{j_0}|\geq \frac{\left(\frac{z}{15\log^2 z}\right)^{\log z}}{\left(\log z\right)^{2A}}.$$
Choose two disjoint subsets of $R_{j_0}$, each with $\left(\frac{z}{16\log^2 z}\right)^{\log z}$ elements.  We will call these subsets $\mathcal Q_1$ and $\mathcal Q_2$.  We then define
$$L_i=\prod_{q\in \mathcal Q_i}q.$$
For future notational ease, we note that
\begin{gather}\label{zeqn}
\left(\frac{z}{16\log^2 z}\right)^{\log z}=z^{\log z-(2+o(1))\log\log z}.\end{gather}
\section{The sizes of $q$, $L_i$ and $\lambda(L_i)$}
Before we construct the sets $\mathcal P_k^i$, it will be useful to have information about the sizes of $q$, $L_i$, and $\lambda(L_i)$.  First, we find bounds for $q\in \mathcal Q_i$:
\begin{lemma}\label{qbound}
For any $q\in \mathcal Q_i$,
$$\left(\frac z6\right)^{\log z}\leq q\leq 2z^{\log z}\left(\log z\right)^{2A}.$$
\end{lemma}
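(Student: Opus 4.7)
The plan is to prove the two inequalities separately, both directly from the construction of $\mathcal Q_i$ and the bounds already established.

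For the upper bound, I would use the fact that every $q\in\mathcal Q_i$ has the form $q=gj+1$ with $g\mid J$, $\omega(g)=\lfloor\log z\rfloor$, and $j\le j_0\le(\log z)^{2A}$. Inequality (\ref{gbound}) already gives $g\le z^{\log z}$, so $q\le z^{\log z}(\log z)^{2A}+1$, and the ``$+1$'' is absorbed by the factor $2$ in front once $z$ is large enough. This direction is essentially bookkeeping.

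For the lower bound, the key observation is that since $\omega(g)=\lfloor\log z\rfloor$ and every prime divisor of $g$ lies in $[z/2,z]$, we have
\[ g\ge (z/2)^{\lfloor\log z\rfloor}\ge (z/2)^{\log z-1}. \]
Then $q\ge g$ (since $j\ge 1$), so it remains to show $(z/2)^{\log z-1}\ge (z/6)^{\log z}$. Rearranging, this is equivalent to $3^{\log z}\ge z/2$, which follows immediately from the inequality $3^{\log z}>z$ already invoked in the previous section.

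There is no real obstacle here; both bounds are immediate from the definitions, from (\ref{gbound}), from the choice $j_0\le(\log z)^{2A}$, and from the trivial estimate $3^{\log z}>z$. The only small point of care is making sure the constants $6$ and $2$ in the statement actually absorb the rounding in $\lfloor\log z\rfloor$ and the additive $+1$, both of which are handled for $z$ sufficiently large.
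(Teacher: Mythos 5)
Your proof is correct and follows essentially the same approach as the paper: both directions use $g\le z^{\log z}$ and $j_0\le(\log z)^{2A}$ for the upper bound, and both reduce the lower bound to $g\ge(z/2)^{\lfloor\log z\rfloor}$ together with the estimate $3^{\log z}>z$. The only cosmetic difference is that the paper writes $q=gj_0+1\le 2gj_0$ where you phrase the same step as absorbing the $+1$ into the factor $2$.
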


\begin{proof}
For the upper bound, we use (\ref{gbound}) to find that
\begin{gather*}q=gj_0+1\leq 2gj_0\leq 2z^{\log z}j_0\leq 2z^{\log z}\left(\log z\right)^{2A}.\end{gather*}
For the lower bound, since $g$ has $\lfloor \log z\rfloor$ prime factors and each of the prime factors is $\geq \frac z2$,
\begin{gather*}q\geq \left(\frac z2\right)^{\log z-1}\geq \left(\frac z2\right)^{\log z}\left(\frac 1z\right)\geq \left(\frac z6\right)^{\log z},\end{gather*}
where again we use the fact that $3^{\log z}>z$.
\end{proof}
We use this to bound $L_i$:
\begin{lemma}\label{Lbound}
For $i=1$ or 2,
$$L_i\leq e^{\left(z^{\log z-(2+o(1))\log\log z}\right)\left(\log^2 z+2A\log\log z\right)}.$$
\end{lemma}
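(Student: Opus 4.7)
The proof is essentially a direct computation. The plan is to write $L_i$ as a product of $|\mathcal Q_i|$ primes, bound each factor using the upper half of Lemma \ref{qbound}, and take logarithms.

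First I would note that, by construction in the previous section, $\mathcal Q_i$ has exactly $\left(\frac{z}{16\log^2 z}\right)^{\log z}$ elements, which by (\ref{zeqn}) equals $z^{\log z - (2+o(1))\log\log z}$. Since $L_i = \prod_{q\in \mathcal Q_i} q$, applying the upper bound $q \leq 2z^{\log z}(\log z)^{2A}$ from Lemma \ref{qbound} to each factor gives
$$L_i \leq \bigl(2z^{\log z}(\log z)^{2A}\bigr)^{|\mathcal Q_i|}.$$

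Next I would take logarithms:
$$\log L_i \leq |\mathcal Q_i|\cdot\bigl(\log 2 + \log z \cdot \log z + 2A\log\log z\bigr).$$
The $\log 2$ term is absorbed into the $\log^2 z$ term (or into the $o(1)$ exponent on the outside), yielding the claimed bound
$$L_i \leq e^{z^{\log z-(2+o(1))\log\log z}\bigl(\log^2 z + 2A\log\log z\bigr)}.$$

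There is no real obstacle here — every ingredient is already in place. The only thing to be mildly careful about is that the $\log 2$ additive constant inside the parenthesis can be safely dropped, since $\log^2 z$ dominates it for large $z$; and that substituting the size of $\mathcal Q_i$ via (\ref{zeqn}) keeps the $(2+o(1))$ coefficient intact in the exponent.
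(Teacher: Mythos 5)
Your proof is correct and is essentially identical to the paper's: both apply the upper bound on $q$ from Lemma \ref{qbound} to each of the $z^{\log z-(2+o(1))\log\log z}$ factors of $L_i$ (via (\ref{zeqn})) and absorb the constant $2$ (i.e.\ the $\log 2$ term) into the $o(1)$. No issues.
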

\begin{proof}
Using the upper bound for $q$ above as well as the size of $\mathcal Q_i$ given in (\ref{zeqn}), we see that
$$L_i=\prod_{q\in \mathcal Q_i}q\leq \left(2z^{\log z}\left(\log z\right)^{2A}\right)^{z^{\log z-(2+o(1))\log\log z}}=e^{\left(z^{\log z-(2+o(1))\log\log z}\right)\left(\log^2 z+2A\log\log z\right)},$$
where the constant 2 at the front of the penultimate expression is absorbed onto the $o(1)$ term.
\end{proof}
Note that this implies
\begin{gather}\label{logL}\log(L_i)\leq z^{\frac 32\log z}.\end{gather}

By contrast, $\lambda(L)$ is quite a bit smaller:
\begin{lemma}\label{lambdaL}
$$\lambda(L_1L_2)\leq e^{\frac 45z}.$$
\end{lemma}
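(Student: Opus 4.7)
The plan is to exploit the fact that every prime $q \in \mathcal Q_1 \cup \mathcal Q_2$ was chosen with $q - 1 = gj_0$ for some $g \mid J$, so that the Carmichael lambda of the product is forced to divide $Jj_0$. The bound then reduces to a standard Chebyshev/PNT estimate on the size of $J$.

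More precisely, first I would observe that $\mathcal Q_1$ and $\mathcal Q_2$ were taken as disjoint subsets of $\mathcal R_{j_0}$, so $L_1 L_2 = \prod_{q \in \mathcal Q_1 \cup \mathcal Q_2} q$ is a squarefree integer. Hence
\[
\lambda(L_1 L_2) = \mathrm{lcm}\{q - 1 : q \in \mathcal Q_1 \cup \mathcal Q_2\}.
\]
For each such $q$ we have $q - 1 = g j_0$ with $g \mid J$, so $q - 1 \mid J j_0$, and therefore $\lambda(L_1 L_2) \mid J j_0$.

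Next I would bound $J$ using the prime number theorem (or just Chebyshev's $\theta$-function):
\[
\log J = \sum_{\substack{z/2 \leq r \leq z \\ r \text{ prime}}} \log r = \theta(z) - \theta(z/2) = \tfrac{z}{2} + o(z).
\]
Combined with the bound $j_0 \leq (\log z)^{2A}$ from the previous section, this gives
\[
\lambda(L_1 L_2) \leq J j_0 \leq e^{(1/2 + o(1))z} (\log z)^{2A} \leq e^{\frac{4}{5} z}
\]
for all sufficiently large $z$, as desired.

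The argument is essentially a direct consequence of the construction: the only nontrivial input is the asymptotic $\theta(z) - \theta(z/2) \sim z/2$, and even a crude Chebyshev-type upper bound $\theta(x) \leq c x$ with any $c < 8/5$ would be more than enough to absorb the $j_0$ factor and reach $4z/5$. So I do not anticipate any real obstacle; the lemma is in some sense the payoff of having designed the $\mathcal Q_i$ so carefully in the previous sections.
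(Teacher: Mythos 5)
Your proof is correct and follows essentially the same route as the paper: both arguments reduce to $\lambda(L_1L_2) \mid Jj_0$ and then bound $Jj_0$. The only cosmetic difference is that you invoke the $\theta$-function asymptotic $\theta(z)-\theta(z/2)\sim z/2$ to bound $J$, whereas the paper uses a looser prime-counting bound of $\tfrac{3z}{4\log z}$ primes each at most $z$ to get $J \le z^{3z/(4\log z)} = e^{3z/4}$; either suffices comfortably.
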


\begin{proof}
For any prime $q\in \mathcal Q_i$, we know that $q-1|Jj_0$.  Since $$\lambda(L_1L_2)|LCM\left[q-1:q\in \mathcal Q_1\bigcup \mathcal Q_2\right],$$it follows that $\lambda(L_1L_2)|Jj_0$ as well.  We know that the number of primes between $\frac z2$ and $z$ is bounded loosely by $\frac{3z}{4\log z}$ (see e.g. \cite{RS}), and hence
$$\lambda(L_1L_2)\leq Jj_0\leq z^{\frac{3z}{4\log z}}\left(\log z\right)^{2A}\leq z^{\frac{4z}{5\log z}}=e^{\frac 45z}.$$
\end{proof}

\section{The set $\mathcal P_k^1$}
Next, we use $\mathcal Q_1$ and $L_1$ to construct one of the two sets of primes that will yield our Carmichael number.  Define
$$\mathcal P_k=\{p:p=d_1k\nu+1:d_1|L_1,\omega(d_1)=z,(k,\nu L_1L_2)=1\}.$$
We must now determine the size of $\mathcal P_{k}$ for our first choice of $k$:

%However, since we need to compare this set to the size of other key quantities, it will help to determine the sizes of those quantities as well.
%\begin{lemma}
%For any $q\in \mathcal Q_i$,
%$$\left(\frac z2\right)^{z^\frac 13}\leq q\leq  z^{z^\frac 13}z^\frac 34.$$
%\end{lemma}
%This is clear since $q$ is comprised of $z^\frac 13$ primes between $\frac z2$ and $z$ and a parameter $j$ of size less than $z^\frac 34$.

%From this, we can find a simplified bound for the size of $L_i$:
%\begin{lemma}
%$$z^{z^{z^\frac 15}}\leq L_i\leq  z^{z^{z^\frac 13}}.$$
%\end{lemma}
%\begin{proof}
%By the definition of $L_i$,
%$$\left(\left(\frac z2\right)^{z^\frac 13}\right)^{z^{z^\frac 14}}\leq L_i\leq  \left(z^{z^\frac 13}\log^3 z\right)^{z^{z^\frac 14}}.$$
%We simplify this to the above.
%\end{proof}
%We can also find a bound on $\lambda(L)$:

%Turning our attention now to $\mathcal P_k^1$, we will prove the following:

\begin{lemma}\label{P1bound}
There exists a $k_1\leq 3\nu z^{A}\left(\log z\right)^{2A}$ such that
$$|\mathcal P_{k_1}|\geq z^{z\log z-(2+o(1))z\log\log z}.$$
\end{lemma}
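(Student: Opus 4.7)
The plan is to mirror the construction of $\mathcal R_{j_0}$ from Section 3, with divisors of $L_1$ now playing the role previously played by divisors of $J$. Two new complications appear: the modulus in the conjecture is much larger, and the definition of $\mathcal P_k$ carries the extra coprimality requirement $(k,\nu L_1L_2)=1$ that was absent in the construction of $\mathcal R_{j_0}$.

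First I would count the divisors $d_1\mid L_1$ with $\omega(d_1)=z$. By (\ref{combid}) and (\ref{zeqn}), this count is at least
$$\binom{|\mathcal Q_1|}{z}\ge\left(\frac{|\mathcal Q_1|}{z}\right)^z=z^{z\log z-(2+o(1))z\log\log z}.$$
For each such $d_1$, I would apply Conjecture 1 not to the modulus $d_1\nu$ (which would leave $k\bmod\nu$ uncontrolled) but to the modulus $d_1\nu^2$ in the residue class $p\equiv d_1\nu+1\pmod{d_1\nu^2}$. The needed coprimality $(d_1\nu+1,d_1\nu^2)=1$ is automatic, since $(d_1,\nu)=1$ (every prime factor of $d_1$ lies in $\mathcal Q_1$ and so is $\ge(z/6)^{\log z}$ by Lemma \ref{qbound}, hence far larger than $\nu$). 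The conjecture then produces a prime $p<d_1\nu^2(\log d_1\nu^2)^A$. Writing $p=d_1k\nu+1$, the residue choice forces $k\equiv 1\pmod\nu$, so $(k,\nu)=1$; and using the bound $d_1\le(2z^{\log z}(\log z)^{2A})^z$ coming from Lemma \ref{qbound}, a short computation gives
$$k<\nu(\log d_1\nu^2)^A\le 3\nu z^A(\log z)^{2A}$$
once $z$ is sufficiently large. Since this quantity is vastly smaller than $(z/6)^{\log z}$, the $k$ we produce has no prime factor in common with $L_1L_2$, so $(k,L_1L_2)=1$ follows for free, and the full requirement $(k,\nu L_1L_2)=1$ holds. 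Thus each $d_1$ yields a prime $p=d_1k(d_1)\nu+1\in\mathcal P_{k(d_1)}$ with $k(d_1)\in[1,3\nu z^A(\log z)^{2A}]$.

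Finally, pigeonholing over this range, some $k_1$ must satisfy
$$|\mathcal P_{k_1}|\ge\frac{z^{z\log z-(2+o(1))z\log\log z}}{3\nu z^A(\log z)^{2A}}=z^{z\log z-(2+o(1))z\log\log z},$$
where the denominator, being only polynomial in $z$, is absorbed into the $o(1)$ term. Distinct $d_1$'s give distinct primes $p=d_1k_1\nu+1$ (since $d_1=(p-1)/(k_1\nu)$ is recoverable from $p$), so no double-counting occurs. The main obstacle is the new coprimality requirement $(k,\nu)=1$, which is handled by inflating the conjecture's modulus from $d_1\nu$ to $d_1\nu^2$ at the cost of only a factor of $\nu$ in the bound on $k$; the remaining condition $(k,L_1L_2)=1$ comes gratis from the enormous size gap between $k$ and the primes composing $L_1L_2$.
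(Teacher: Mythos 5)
Your proposal mirrors the paper's proof essentially step for step: the same congruence condition $p\equiv 1+d_1\nu\pmod{d_1\nu^2}$ to force $(k,\nu)=1$, the same size argument using Lemma \ref{qbound} to obtain $(k,L_1L_2)=1$ for free, the same application of Conjecture 1 with the bound on $(\log(d_1\nu^2))^A$, and the same pigeonhole over the roughly $3\nu z^A(\log z)^{2A}$ possible values of $k$. The only differences are cosmetic — you make explicit the coprimality check $(d_1\nu+1,d_1\nu^2)=1$ and the injectivity of $d_1\mapsto p$, both of which the paper leaves implicit.
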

\begin{proof}
Since we require $p=d_1k\nu+1$ and $(k,\nu)=1$, it is sufficient (though not necessary) to consider the congruence
\begin{gather}\label{modforp1}p\equiv 1+d_1\nu\pmod{d_1\nu^2},\end{gather}
since we would then have $$p=d_1\nu(\nu k'+1)+1$$for some $k'$, and hence $k=\nu k'+1$ would be relatively prime to $\nu$.

Note that for any $d_1|L_1$, we can bound the modulus in (\ref{modforp1}) with
\begin{gather}\label{d1bound}d_1\nu^2\leq \nu^2\left(z^{\log z}\left(\log z\right)^{2A}\right)^{z}\leq \nu^2z^{z\log z+2Az\frac{\log\log z}{\log z}}.\end{gather}
%For any $p\in \mathcal P_k^1$,
%\begin{gather}\label{d1bound}\frac{p}{d_1\nu^2}\leq 2\nu^2 d_1\leq 2\nu^2\left(z^{\log z}\left(\log z\right)^{2A}\right)^{z}\leq 2\nu^2z^{z\log z+2Az\frac{\log\log z}{\log z}}.\end{gather}
Hence,
%for any $p\in \mathcal P_k^1$
\begin{gather}\label{k1bound}\left(\log \left(d_1\nu^2\right)\right)^{A}\leq z^{A}\left(\log z\right)^A\left[\log z+3A\log\log z\right]^A<2 z^{A}\left(\log z\right)^{2A}.
\end{gather}
So we see as before that by the conjecture,
\begin{gather}\label{Pk1}\sum_{k'=1}^{2z^{A}\left(\log z\right)^{2A}}|\mathcal P_{\nu k'+1}|\geq \#\{d_1|L_1:\omega(d_1)=z\}.\end{gather}
If $z$ is sufficiently large relative to $\nu$, we have
\begin{gather}\label{qk}
k=\nu k'+1\leq 3\nu z^{A}\left(\log z\right)^{2A}<\left(\frac z6\right)^{\log z}\leq q
\end{gather}
by Lemma \ref{qbound}.  So it follows that $(k,q)=1$ for every $q|L_1L_2$.  Thus, each $p$ appearing on the left-hand side of (\ref{Pk1}) appears exactly once.  Note that
$$\#\{d_1|L_1:\omega(d_1)=z\}\geq \left(\begin{array}{c}z^{\log z-(2+o(1))\log\log z} \\ z\end{array}\right)\geq z^{z\log z-(2+o(1))z\log\log z}.$$
by (\ref{combid}).  So there must exist a $k_1\leq 3\nu z^{A}\left(\log z\right)^{2A}$ such that
$$|\mathcal P_{k_1}|\geq \frac{z^{z\log z-(2+o(1))z\log\log z}}{3\nu z^{A}\left(\log z\right)^{2A}}=z^{z\log z-(2+o(1))z\log\log z}.$$
\end{proof}
\section{The set $\mathcal P_k^2$}

Armed with this definition of $k_1$, we now define another set of primes $\mathcal P_{k_2}$.  The $k_2$ here will be chosen such that for any $p_1\in \mathcal P_{k_1}$ and $p_2\in \mathcal P_{k_2}$, we will have $(p_1-1,p_2-1)=\nu$.  This is what will allow us to prove that $K=\nu$.

%Define
%$$\mathcal P_k^2=\{p=d_2k\nu+1:d_2|L_2\}.$$
%We will prove the following:
\begin{lemma}\label{P2bound}
There exists a $k_2\leq 7\nu^2 z^{2A}\left(\log z\right)^{4A}$ such that
$$|\mathcal P_{k_2}|\geq z^{z\log z-(2+o(1))z\log\log z}$$
and $(k_1,k_2)=1$.
\end{lemma}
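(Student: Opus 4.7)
The plan is to mirror Lemma \ref{P1bound} with two modifications: replace $L_1$ by $L_2$ throughout, and impose an auxiliary congruence mod $k_1$ that forces $(k_2,k_1)=1$. I would define
\[
\mathcal P_k^2=\{p\text{ prime}:p=d_2k\nu+1,\ d_2\mid L_2,\ \omega(d_2)=z,\ (k,\nu L_1L_2)=1\}
\]
and, for each $d_2\mid L_2$ with $\omega(d_2)=z$, apply Conjecture 1 to the modulus $l=d_2\nu^{2}k_1$ and to the residue $c$ obtained via CRT from $c\equiv 1+d_2\nu\pmod{d_2\nu^{2}}$ and $c\equiv a\pmod{k_1}$ for an $a$ chosen as below. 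The first congruence, exactly as in Lemma \ref{P1bound}, forces $k_2=(p-1)/(d_2\nu)\equiv 1\pmod\nu$, so $(k_2,\nu)=1$ automatically.

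The one step that requires genuine thought, and hence the main obstacle, is the selection of $a$. I need $(a,k_1)=1$ (so that $(c,l)=1$ and Conjecture 1 applies) and $(a-1,k_1)=1$ (so that, using $(d_2\nu,k_1)=1$, the relation $p-1\equiv a-1\pmod{k_1}$ forces $(k_2,k_1)=1$). For each prime $q\mid k_1$ these exclude the two residues $0$ and $1$ mod $q$, leaving $q-2$ valid residues, so a suitable $a$ exists by CRT provided every prime dividing $k_1$ is at least $3$. This is exactly where the hypothesis that $\nu$ is even is used: since $(k_1,\nu)=1$ by Lemma \ref{P1bound} and $\nu$ is even, $k_1$ is odd, and the construction of $a$ goes through.

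Granting this, the remaining steps are bookkeeping in the style of Lemma \ref{P1bound}. From $d_2\leq z^{z\log z+O(z\log\log z/\log z)}$ (the analogue of (\ref{d1bound})) and the bound on $k_1$ from Lemma \ref{P1bound}, I would compute $\log l\leq (1+o(1))z\log^{2}z$ and hence $(\log l)^{A}\leq O(z^{A}(\log z)^{2A})$, so
\[
k_2\leq \frac{l(\log l)^{A}}{d_2\nu}=\nu k_1(\log l)^{A}\leq 7\nu^{2}z^{2A}(\log z)^{4A}
\]
for $z$ large. Coprimality of $k_2$ with $L_1L_2$ is then automatic, since by Lemma \ref{qbound} every prime divisor of $L_1L_2$ exceeds $(z/6)^{\log z}$, which dwarfs this bound on $k_2$.

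To finish I would pigeonhole as in Lemma \ref{P1bound}. By (\ref{combid}) and (\ref{zeqn}) there are at least $z^{z\log z-(2+o(1))z\log\log z}$ admissible $d_2$, each contributing a prime, and distinct $d_2$'s yield distinct primes because $k_2$ is far smaller than any prime divisor of $L_2$, so $d_2=\gcd(p-1,L_2)$ can be read off $p$. Distributing these primes across the $\leq 7\nu^{2}z^{2A}(\log z)^{4A}$ possible values of $k_2$ yields some $k_2$ for which $|\mathcal P_{k_2}|$ achieves the required lower bound, the polynomial denominator being absorbed into the $o(1)$ in the exponent.
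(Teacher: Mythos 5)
Your proposal is correct and follows the paper's route for this lemma: apply Conjecture 1 with modulus $d_2\nu^2k_1$ for each of the $\geq z^{z\log z-(2+o(1))z\log\log z}$ admissible $d_2\mid L_2$ with $\omega(d_2)=z$, observe that every resulting $k$ is at most $7\nu^2z^{2A}(\log z)^{4A}$ and coprime to $\nu k_1L_1L_2$ (the latter because $k$ is tiny compared with the primes $q\geq(z/6)^{\log z}$ dividing $L_1L_2$), and pigeonhole over the possible $k$. The only divergence is the residue class modulo $k_1$: the paper simply prescribes $p\equiv 1+d_2\nu\pmod{d_2\nu^2k_1}$, so that $p-1=d_2\nu(\nu k_1k'+1)$ and $k=\nu k_1k'+1$ is automatically $1$ modulo both $\nu$ and $k_1$, with no CRT selection of $a$ and no appeal to the parity of $\nu$ at this step. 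Your variant (choosing $a\not\equiv 0,1\pmod q$ for each prime $q\mid k_1$, possible since $\nu$ even forces $k_1=\nu k'+1$ to be odd) costs that extra observation, but it has the merit of making the admissibility of the chosen residue class explicit — namely that it is coprime to $k_1$ — which the paper's fixed residue $1+d_2\nu$ does not visibly guarantee (if some prime of $k_1$ divided $1+d_2\nu$, that class would contain no primes, a point the paper passes over). So your treatment of what you call the main obstacle is a slightly more careful, if slightly heavier, version of the same argument, and the bookkeeping (bounds on $\log l$, the count of $d_2$ via (\ref{combid}) and (\ref{zeqn}), distinctness via $d_2=\gcd(p-1,L_2)$, and the pigeonhole) matches the paper's.
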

\begin{proof}
%Note first that if $k_1=1$ then we can simply find $P_{k_2}$ in an identical manner to Lemma \ref{P1bound}, and we trivially have $(k_1,k_2)=1$.  So we will assume $k_1\neq 1$.
Again, we choose a congruence condition that will be sufficient though not necessary:
%We can describe some of the elements of $\mathcal P_k^2$ via a pair of congruence conditions:
\begin{gather*}
p\equiv 1+d_2\nu \pmod{d_2\nu^2k_1}.
\end{gather*}
In this case, we have
$$p=d_2\nu(\nu k_1k'+1 )+1.$$
%Note that $(k_1,\nu)=1$ by Lemma \ref{P1bound}, and $(k_1,d_2)=1$ since $(k_1,L_2)=1$  and $d_2|L_2$.
Letting $k=\nu k'k_1+1$, we see that $(k,k_1)=1$ and $(k,\nu)=1$.

%Since $(k_1,d_2\nu)=1$ by construction, we can combine these into a single congruence modulo $d_2k_1\nu$ via Chinese Remainder Theorem.  Let $h$ be such that the pair of congruences above are equivalent to $$p\equiv h\pmod{k_1d_2\nu^2}.$$By similar reasoning to (\ref{d1bound}), we have
%\begin{gather}\label{d2bound}k_1d_2\nu^2 \leq \nu^2\left(z^{z\log z+z(2A)\frac{\log\log z}{\log z}}\right)\left(3z^{A}\left(\log z\right)^{2A}\right).\end{gather}
Taking the log of the bound for $k_1$ in Lemma \ref{P1bound} gives
$$\log k_1\leq 3A\log z.$$
So we can use the bounds in (\ref{d1bound}) and Lemma \ref{P1bound} to find that
\begin{gather}\label{d2bound}d_2\nu^2k_1<\nu^2z^{z\log z+2Az\frac{\log\log z}{\log z}}\left(3\nu z^{A}\left(\log z\right)^{2A}\right)=z^{z\log z+(2A+o(1))z\frac{\log\log z}{\log z}},
\end{gather}
and hence
\begin{gather}\label{k2bound}\left(\log \left(d_2\nu^2k_1\right)\right)^{A}<\left(z\log^2 z+3Az\log \log z\right)^{A}<2z^{A}\left(\log z\right)^{2A}
\end{gather}
when $z$ is sufficiently large.  So as before,
$$\sum_{k'=1}^{2z^{A}\left(\log z\right)^{2A}}|\mathcal P_{\nu k'k_1+1}|\geq \#\{d_2|L_2:\omega(d_2)=z\},$$
From here, the proof is similar to Lemma \ref{P1bound}, beginning with equation (\ref{Pk1}).  We replace the bound for $k$ in (\ref{qk}) with
\begin{align*}
k=\nu k_1k'+1\leq &2\nu z^{A}\left(\log z\right)^{2A}k_1+1\\
\leq &2\nu z^{A}\left(\log z\right)^{2A}\left(3\nu z^{A}\left(\log z\right)^{2A}\right)+1\\
\leq & 7\nu^2 z^{2A}\left(\log z\right)^{4A}.
\end{align*}
Clearly, this is still less than $\left(\frac z6\right)^{\log z}$, and hence the conclusion after (\ref{qk}) still applies.  Thus, there must exist a $k_2\leq 7\nu^2 z^{2A}\left(\log z\right)^{4A}$ such that $(k_1,k_2)=1$ and
$$|\mathcal P_{k_2}|\geq \frac{z^{z\log z-(2+o(1))z\log\log z}}{7\nu^2 z^{2A}\left(\log z\right)^{4A}}=z^{z\log z-(2+o(1))z\log\log z}.$$

\end{proof}
We now prove the claim that was made at the beginning of this section:

\begin{lemma}\label{K=nu}
Let $p_1\in \mathcal P_{k_1}$ and $p_2\in \mathcal P_{k_2}$.  Then $(p_1-1,p_2-1)=\nu$.
\end{lemma}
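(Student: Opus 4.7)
The plan is to compute $(p_1-1, p_2-1)$ directly from the explicit forms of $p_1$ and $p_2$. Any $p_1 \in \mathcal{P}_{k_1}$ satisfies $p_1 - 1 = d_1 k_1 \nu$ with $d_1 \mid L_1$, and any $p_2 \in \mathcal{P}_{k_2}$ satisfies $p_2 - 1 = d_2 k_2 \nu$ with $d_2 \mid L_2$. Since $\gcd(a\nu, b\nu) = \nu \cdot \gcd(a, b)$ for any positive integers $a, b$, it suffices to show that $\gcd(d_1 k_1, d_2 k_2) = 1$, which in turn decomposes into the four pairwise coprimalities $(d_1, d_2)$, $(d_1, k_2)$, $(k_1, d_2)$, and $(k_1, k_2)$.

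The first of these, $(d_1, d_2) = 1$, is immediate because $d_1 \mid L_1$, $d_2 \mid L_2$, and $L_1, L_2$ are products over the disjoint prime sets $\mathcal{Q}_1$ and $\mathcal{Q}_2$. The last, $(k_1, k_2) = 1$, is built into the conclusion of Lemma \ref{P2bound}; indeed the construction there produces $k_2 \equiv 1 \pmod{k_1}$. The two middle claims $(d_1, k_2) = 1$ and $(k_1, d_2) = 1$ reduce to verifying $(k_i, L_1 L_2) = 1$ for $i = 1, 2$, which follows from comparing the size bounds $k_1 \leq 3\nu z^A (\log z)^{2A}$ and $k_2 \leq 7\nu^2 z^{2A}(\log z)^{4A}$ from Lemmas \ref{P1bound} and \ref{P2bound} against the lower bound $q \geq (z/6)^{\log z}$ from Lemma \ref{qbound}, exactly as in (\ref{qk}): no prime dividing $L_1 L_2$ is small enough to divide either $k_i$.

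Assembling these four coprimality facts gives $\gcd(d_1 k_1, d_2 k_2) = 1$, and hence $(p_1 - 1, p_2 - 1) = \nu \cdot \gcd(d_1 k_1, d_2 k_2) = \nu$. The main ``obstacle'' is really just organizational bookkeeping: every coprimality condition needed here has been arranged during the earlier construction of $\mathcal{Q}_1, \mathcal{Q}_2$, of $k_1$, and of $k_2$, so the proof of the lemma consists almost entirely of citing the relevant properties and observing that together they force the GCD to collapse to exactly $\nu$.
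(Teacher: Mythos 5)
Your proof is correct and follows essentially the same route as the paper's. The paper packages the coprimality bookkeeping slightly differently — it observes $(L_1 k_1\nu, L_2 k_2\nu)=\nu$ and then uses the divisibility chain $\nu \mid p_i-1 \mid L_i k_i\nu$ to sandwich the gcd, whereas you pull out the factor of $\nu$ via $\gcd(a\nu,b\nu)=\nu\gcd(a,b)$ and verify $\gcd(d_1k_1,d_2k_2)=1$ directly through the four pairwise coprimalities — but the underlying facts invoked (disjointness of $\mathcal Q_1,\mathcal Q_2$, $(k_1,k_2)=1$ from Lemma~\ref{P2bound}, and $(k_i,L_1L_2)=1$ from the size comparison in (\ref{qk})) are identical.
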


\begin{proof}
We have shown in Lemmas \ref{P1bound} and \ref{P2bound} that each $k_i$ is coprime to $\nu L_1L_2$ and that $(k_1,k_2)=1$.  Moreover, $(L_1,L_2)=1$, since the two numbers are comprised of nonintersecting sets of prime factors.  So $(L_1k_1\nu,L_2k_2\nu)=\nu$.  Since $\nu|p_1-1|L_1k_1\nu$ and $\nu|p_2-1|L_2k_2\nu$, we then have $(p_1-1,p_2-1)=\nu$.  This proves the lemma.
\end{proof}

\section{Constructing a Carmichael number}
Finally, we construct Carmichael numbers using these sets $\mathcal P_{k_1}$ and $\mathcal P_{k_2}$.  In order to do this, we recall a theorem of van Emde Boas and Kruyswijk \cite{EK} and Meshulam \cite{Me}.  Let $s(L)$ denote smallest number such that a sequence of at least $s(L)$ elements in $(\mathbb Z/L\mathbb Z)^\times$ must contain some nonempty sequence whose product is the identity.  Then we have the following:

\begin{theorem}\label{veb}
For any $L$,
$$s(L)<\lambda(L)(1+\log(\frac{\phi(L)}{\lambda(L)})).$$
Moreover, let $v>t>s(L)$.  Then any sequence of $v$ elements in $(\mathbb Z/L\mathbb Z)^\times$ contains at least $\left(\begin{array}{c} v \\t\end{array}\right)/\left(\begin{array}{c} v \\s(L)\end{array}\right)$ distinct subsequences of length at least $t-s(L)$ and at most $t$ whose product is the identity.
\end{theorem}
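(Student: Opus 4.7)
The plan is to handle the two halves of the theorem separately. First I would view $G=(\mathbb{Z}/L\mathbb{Z})^\times$ as an abelian group of order $\phi(L)$ and exponent $\lambda(L)$, so that $s(L)$ is exactly the Davenport constant $D(G)$. The bound $D(G)<\lambda(L)(1+\log(\phi(L)/\lambda(L)))$ is the classical van Emde Boas--Kruyswijk / Meshulam estimate \cite{EK, Me}; I would reproduce it by the usual group-ring / inductive argument in which one assumes a zero-sum-free sequence of length $\ell$ in $G$ and bounds $\ell$ by controlling how many partial products (or characters evaluated at them) can simultaneously avoid the identity, passing through the Sylow / elementary-divisor structure of $G$ in the inductive step. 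This is the only genuinely algebraic step of the theorem, and the one place where the fine structure of $G$ matters.

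For the second assertion, let $a_1,\ldots,a_v$ be an arbitrary sequence in $(\mathbb{Z}/L\mathbb{Z})^\times$. For each $t$-element index set $T\subseteq\{1,\ldots,v\}$, I would extract a zero-sum subsequence $U(T)\subseteq T$ as follows: choose a maximal collection $U_1,\ldots,U_r$ of pairwise disjoint nonempty zero-sum subsequences inside $T$, which exists and is nonempty because $|T|=t>s(L)$, and set $U(T)=U_1\cup\cdots\cup U_r$. By maximality, $T\setminus U(T)$ contains no nonempty zero-sum subsequence, so the definition of $s(L)$ forces $|T\setminus U(T)|<s(L)$; simultaneously $\prod_{i\in U(T)}a_i=\prod_j\prod_{i\in U_j}a_i=1$, so $U(T)$ is itself zero-sum, with length in $(t-s(L),t]\subseteq[t-s(L),t]$.

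Finally, I would count by double-counting the map $T\mapsto U(T)$. A fixed zero-sum subsequence $U$ with $|U|\in[t-s(L),t]$ is the image of at most $\binom{v-|U|}{t-|U|}$ index sets $T$, since such a $T$ is obtained by adjoining $t-|U|$ indices drawn from $\{1,\ldots,v\}\setminus U$. Writing $j=t-|U|\in[0,s(L)]$, this fibre size $\binom{v-t+j}{j}$ is maximised at $j=s(L)$ and satisfies $\binom{v-t+s(L)}{s(L)}\leq\binom{v}{s(L)}$, where the last inequality uses $t\geq s(L)$. Dividing the total $\binom{v}{t}$ of $t$-subsets by this uniform fibre bound yields at least $\binom{v}{t}/\binom{v}{s(L)}$ distinct good subsequences in the image, as required. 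The hard step is the Davenport-constant inequality in the first half; once that is invoked, the extraction and binomial bookkeeping here are entirely routine.
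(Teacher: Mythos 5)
The paper itself gives no proof of this statement: it is quoted as a known result of van Emde Boas--Kruyswijk and Meshulam (in the combined form it is essentially Proposition 1.2 of \cite{AGP}), so there is no internal argument for your proposal to match or diverge from. What you supply is consistent with that literature and, for the second assertion, is in fact the standard and correct argument: for each $t$-element index set $T$ the union $U(T)$ of a maximal disjoint family of nonempty zero-product subsequences is itself zero-product, and maximality plus the definition of $s(L)$ gives $|T\setminus U(T)|<s(L)$, so $t-s(L)<|U(T)|\leq t$; your double count is also right, since the fibre over a fixed $U$ has size at most $\binom{v-|U|}{t-|U|}=\binom{v-t+j}{j}$ with $j=t-|U|\leq s(L)$, this is nondecreasing in $j$ because $v\geq t$, and $\binom{v-t+s(L)}{s(L)}\leq\binom{v}{s(L)}$ because $t\geq s(L)$, giving at least $\binom{v}{t}/\binom{v}{s(L)}$ distinct zero-product subsequences of the stated lengths. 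The one caveat is the first inequality, $s(L)<\lambda(L)\bigl(1+\log(\phi(L)/\lambda(L))\bigr)$: your ``usual group-ring / inductive argument'' is only a gesture, not a proof, and this Davenport-constant bound is the genuinely nontrivial half. Deferring it to \cite{EK} and \cite{Me} is exactly what the paper does, so citing it is acceptable; just be aware that as written your sketch of that half would not stand on its own, whereas your treatment of the combinatorial half is complete.
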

In our case, we have the following bound for $s(L_1L_2k_1k_2)$:
\begin{lemma}\label{sbounds}
$$s(L_1L_2k_1k_2)<e^{z}.$$
\end{lemma}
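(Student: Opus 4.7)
The plan is to apply Theorem \ref{veb} with $L=L_1L_2k_1k_2$, then bound $\lambda(L_1L_2k_1k_2)$ and $1+\log(\phi(L_1L_2k_1k_2)/\lambda(L_1L_2k_1k_2))$ separately and multiply them out. Since both factors turn out to be $e^{o(z)}$ or $e^{(4/5+o(1))z}$, the product comfortably fits under $e^z$.

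For the $\lambda$-factor, I would first note that $\lambda(L_1L_2k_1k_2)$ divides $\text{lcm}(\lambda(L_1L_2),\lambda(k_1k_2))$, which in turn divides $\lambda(L_1L_2)\cdot k_1k_2$. Lemma \ref{lambdaL} gives $\lambda(L_1L_2)\leq e^{4z/5}$, while Lemmas \ref{P1bound} and \ref{P2bound} give $k_1k_2\leq 21\nu^3 z^{3A}(\log z)^{6A}$. Thus
\begin{gather*}
\lambda(L_1L_2k_1k_2)\leq e^{4z/5}\cdot 21\nu^3 z^{3A}(\log z)^{6A}=e^{4z/5+O(\log z)}.
\end{gather*}

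For the second factor, I would use the crude bound $\phi(L_1L_2k_1k_2)/\lambda(L_1L_2k_1k_2)\leq L_1L_2k_1k_2$, so that
\begin{gather*}
1+\log\!\left(\tfrac{\phi(L_1L_2k_1k_2)}{\lambda(L_1L_2k_1k_2)}\right)\leq 1+\log(L_1L_2k_1k_2)\leq 3\,z^{(3/2)\log z},
\end{gather*}
by applying (\ref{logL}) to each $L_i$ and absorbing the negligible $\log(k_1k_2)=O(\log z)$ term. Writing $z^{(3/2)\log z}=e^{(3/2)(\log z)^2}$, this factor is $e^{O((\log z)^2)}$.

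Multiplying the two bounds gives
\begin{gather*}
s(L_1L_2k_1k_2)<e^{4z/5+O(\log z)}\cdot e^{O((\log z)^2)}=e^{4z/5+O((\log z)^2)},
\end{gather*}
and since $(\log z)^2=o(z)$, the exponent is less than $z$ for $z$ sufficiently large, yielding the claimed bound. There is no real obstacle here: the bounds in Lemmas \ref{Lbound}, \ref{lambdaL}, \ref{P1bound}, \ref{P2bound} are already so loose that only the dominant $e^{4z/5}$ contribution from $\lambda(L_1L_2)$ matters, and the only mild care needed is to observe that $\log(L_1L_2k_1k_2)$, though much larger than $\lambda(L_1L_2)$, contributes only an $e^{O((\log z)^2)}$ factor after taking a logarithm.
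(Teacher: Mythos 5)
Your proof is correct and follows essentially the same route as the paper: bound $\lambda(L_1L_2k_1k_2)$ by $\lambda(L_1L_2)k_1k_2$ using Lemmas \ref{lambdaL}, \ref{P1bound}, \ref{P2bound}, bound the logarithmic factor via (\ref{logL}), and multiply. (As a minor aside, your $\nu^3$ is actually the correct exponent for $k_1k_2$; the paper's $\nu^4$ appears to be a small typo, though it does not affect the argument.)
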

\begin{proof}
First,
$$\lambda(L_1L_2k_1k_2)\leq \lambda(L_1L_2)k_1k_2\leq e^{\frac 45z}\left(21\nu^4z^{3A}\left(\log z\right)^{6A}\right)\leq e^{\frac 56 z}$$
by Lemmas \ref{lambdaL}, \ref{P1bound}, and \ref{P2bound}.  Meanwhile, by (\ref{logL}),
$$\log(L_1L_2k_1k_2)\leq 2\log(L_1L_2)\leq 2z^{3\log z}=2e^{3\log^2 z}<e^{\frac 16 z}$$
when $z$ is large. Thus,
$$s(L_1L_2k_1k_2)<e^{z}.$$
\end{proof}

Now, for $i=1$ or 2, let $F_i(z,X)$ denote the set of integers $n_i\leq X$ such that\\

\par (i) For any $p|n_i$, $p\in \mathcal P_{k_i}$, and

\par (ii) $n_i\equiv 1\pmod{L_1L_2k_1k_2\nu}$.\\

Combining Theorem \ref{veb} and Lemma \ref{sbounds} gives the following:
\begin{lemma}\label{ni}
For $i=1$ or 2,
$$\left|F_i\left(z,z^{z^{z+1}\left(\log z+\left(2A+o(1)\right)\frac{\log\log z}{\log z}\right)}\right)\right|\geq z^{z^{z+1}\left(\log z-(2+o(1))\log\log z\right)}.$$

%(i) For any $p_i|n_i$, $p_i\in \mathcal P_{k_i}^i$,

%(ii) $n_i\equiv 1\pmod{L_1L_2k_1k_2}$
\end{lemma}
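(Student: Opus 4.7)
The plan is to apply Theorem \ref{veb} to the sequence $\mathcal P_{k_i}$ viewed inside the multiplicative group $(\mathbb Z/L_1L_2k_1k_2\nu\mathbb Z)^\times$. A key simplification is that every $p\in\mathcal P_{k_i}$ satisfies $p\equiv 1\pmod\nu$ by construction (since $p=d_ik_i\nu+1$), so any subproduct is automatically $\equiv 1\pmod\nu$; the only nontrivial congruence to arrange is modulo $L_1L_2k_1k_2$, for which Lemma \ref{sbounds} gives $s(L_1L_2k_1k_2)<e^z$.

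I would then set $t=z^z$ and $v=|\mathcal P_{k_i}|\geq z^{z\log z-(2+o(1))z\log\log z}$ from Lemma \ref{P1bound} or Lemma \ref{P2bound}. Two compatibility checks are needed. First, $t>s(L_1L_2k_1k_2)$ is trivial since $z^z\gg e^z$. Second, a product of at most $t$ primes from $\mathcal P_{k_i}$ must lie below $X=z^{z^{z+1}(\log z+(2A+o(1))\log\log z/\log z)}$. The bounds in Lemma \ref{qbound} (applied to the $z$ prime factors of $d_i$), together with the bounds on $k_i$ from Lemmas \ref{P1bound} and \ref{P2bound}, give $\log p\leq z\log^2 z+(2A+o(1))z\log\log z$ for any $p\in\mathcal P_{k_i}$. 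Multiplying by $t$ yields $t\log p\leq z^{z+1}\log^2 z+(2A+o(1))z^{z+1}\log\log z=\log X$, so the product falls below $X$ at leading order.

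Theorem \ref{veb} then produces at least $\binom{v}{t}/\binom{v}{s(L_1L_2k_1k_2)}$ distinct nonempty subsequences of $\mathcal P_{k_i}$ of length at most $t$ whose product is $\equiv 1\pmod{L_1L_2k_1k_2}$. Combined with the automatic mod-$\nu$ congruence, each such product is an element of $F_i(z,X)$, and unique factorization guarantees that distinct subsequences yield distinct integers $n_i$.

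The final step is to estimate the ratio. Using (\ref{combid}), $\binom{v}{t}\geq(v/t)^t$, while $\binom{v}{s(L_1L_2k_1k_2)}\leq v^{e^z}$ contributes only $O(e^z\cdot z\log^2 z)$ to the exponent, which is $o(z^{z+1}\log z\log\log z)$ since $e^z=o(z^z)$. The main term $t\log(v/t)=z^{z+1}\log^2 z-(2+o(1))z^{z+1}\log z\log\log z-z^{z+1}\log z$ collapses to $\log\bigl(z^{z^{z+1}(\log z-(2+o(1))\log\log z)}\bigr)$ once the lower-order $z^{z+1}\log z$ term is absorbed into $-(2+o(1))z^{z+1}\log z\log\log z$. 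The only real obstacle in carrying this out is careful bookkeeping of the nested exponentials; the choice $t=z^z$ is engineered precisely so that the cutoff $X$ and the target lower bound both fall out at leading order.
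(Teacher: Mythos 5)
Your proposal is correct and follows essentially the same route as the paper: apply Theorem \ref{veb} with $t=z^z$ and $v=|\mathcal P_{k_i}|$, use Lemma \ref{sbounds} to control $s(L_1L_2k_1k_2)$, verify that products of at most $t$ primes from $\mathcal P_{k_i}$ stay below $X$, and estimate the resulting binomial ratio. The paper actually pushes the reduction one step further (observing that $p\equiv 1\pmod{d_ik_i\nu}$ makes the $k_i\nu$ part automatic, so only a congruence mod $L_1L_2k_{3-i}$ remains), but since both arguments bound the relevant Davenport-type constant by $s(L_1L_2k_1k_2)<e^z$, this makes no numerical difference; your explicit remark that distinct subsequences of distinct primes produce distinct integers $n_i$, and your verification that the denominator binomial and the $-z^{z+1}\log z$ term are absorbed into the $o(1)$, are both accurate and match the bookkeeping in the paper.
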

\begin{proof}
We prove this first for $i=2$; the case of $i=1$ can be proven with nearly identical reasoning but slightly better bounds.  To begin, we know that for any $p\in \mathcal P_{k_2}$, $p\equiv 1\pmod{d_2k_2\nu}$ for some $d_2|L_2$.  So it only remains to show that we can combine these $p$ into products $n_1\equiv 1\pmod{L_1L_2k_2}$.

To this end, we recall that
$$|\mathcal P_{k_2}|\geq z^{z\log z-(2+o(1))z\log\log z}$$
by Lemma \ref{P1bound}.  Clearly, this is much bigger than $s(L_1L_2k_1)$, since $s(L_1L_2k_1)\leq s(L_1L_2k_1k_2)$.  So define
\begin{gather*}t=z^z,\\ v=z^{z\log z-(2+o(1))z\log\log z},\end{gather*}
where $v$ is the lower bound for $\mathcal P_{k_2}$ above.

We see that $t<v$.   So by Theorem \ref{veb}, the number of $n_2$ that can be constructed by products of at most $t$ elements and at least $t-s(L)$ elements in $\mathcal P_{k_2}$ is
\begin{align*}  \geq &\left(\begin{array}{c} z^{z\log z-(2+o(1))z\log\log z}\\ z^z\end{array}\right)/\left(\begin{array}{c} z^{z\log z-(2+o(1))z\log\log z}\\ z^{\frac{z}{\log z}}\end{array}\right)\\
\geq & \left(\frac{z^{z\log z-(2+o(1))z\log\log z}}{z^z}\right)^{z^z}/\left(z^{z\log z-(2+o(1))z\log\log z}\right)^{z^{\frac{z}{\log z}}}\\
\geq & \left(z^{z\log z-(2+o(1))z\log\log z}\right)^{z^z-z^{\frac{z}{\log z}}}\\
=& z^{z^{z+1}\log z-(2+o(1))z^{z+1}\log\log z-z^{\frac{z}{\log z}+1}\log z+(2+o(1))z^{\frac{z}{\log z}+1}\log\log z}\\
=& z^{z^{z+1}\left(\log z-(2+o(1))\log\log z\right)}.
\end{align*}
By (\ref{d2bound}) and (\ref{k2bound}), for any $p\in \mathcal P_{k_2}$,
$$p\leq z^{z\log z+\left(2A+o(1)\right)\frac{z\log\log z}{\log z}}.$$
Since any $n_2$ will have at most $t=z^z$ prime factors,
\begin{align*}
n_2&\leq \left(z^{z\log z+\left(2A+o(1)\right)\frac{z\log\log z}{\log z}}\right)^{z^{z}}\\
=&z^{z^{z+1}\left(\log z+\left(2A+o(1)\right)\frac{\log\log z}{\log z}\right)}.
\end{align*}
So
$$\left|F_2\left(z,z^{z^{z+1}\left(\log z+\left(2A+o(1)\right)\frac{\log\log z}{\log z}\right)}\right)\right|\geq z^{z^{z+1}\left(\log z-(2+o(1))\log\log z\right)}.$$
%Then there exists $n_1$ consisting of at most $t$ factors and at least $t-s$ factors such that $n_1\equiv 1\pmod{L_2k_2}$.  For such an $n_1$, we recall from (\ref{d1bound}) and Lemma \ref{P1bound} that any prime $p_1$ can be bounded by
%$$p_1<2d_1k_1\leq z^{z^{z^\frac 14}}z^{3z^\frac 14}.$$
%So $$n_1\leq p_1^t\leq \left(z^{z^{z^\frac 14}}z^{3z^\frac 14}\right)^{2e^{5z}}=z^{z^{z^\frac 14}+3z^\frac 14+2e^{5z}}< e^{e^{6z}}.$$
For the case of $i=1$, the proof is the same except that instead of Lemma \ref{P2bound} and equations (\ref{d2bound}) and (\ref{k2bound}), we apply Lemma \ref{P1bound} and equations (\ref{d1bound}) and (\ref{k1bound}).
\end{proof}
Finally, let

$$X=z^{2z^{z+1}\left(\log z+2A\frac{\log\log z}{\log z}\right)}$$
We give the following as a helpful lookup table comparing logs of $X$ to logs of $z$:
\begin{gather*}
\log X=2z^{z+1}\left(\log^2 z+\left(2A+o(1)\right)\log\log z\right),\\
\log\log X=z\log z+O(\log z),\\
\log\log \log X=(1+o(1))\log z,\\
\log\log \log \log X=(1+o(1))\log\log z.
\end{gather*}
We can use Lemma \ref{ni} to prove our main theorem:
\begin{theorem}
$$C_\nu(X)\geq X^{1-(2+o(1))\frac{\log\log \log \log X}{\log \log\log X}}.$$
\end{theorem}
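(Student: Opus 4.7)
The plan is to multiply elements across the two families constructed in Lemma \ref{ni}: for every pair $(n_1, n_2) \in F_1 \times F_2$, set $n = n_1 n_2$ and show that $n$ is a Carmichael number up to $X$ with $K = \nu$, then count the resulting products.

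To verify the Carmichael property via Korselt's criterion, I would check squarefreeness and the divisibility $p - 1 \mid n - 1$. The prime supports of $\mathcal{P}_{k_1}$ and $\mathcal{P}_{k_2}$ are disjoint (the $(p-1)$'s factor through $L_1$ and $L_2$ respectively, and $(L_1,L_2) = 1$), so $n_1$ and $n_2$ share no prime factors; each $n_i$ is itself squarefree because the subsequences produced by Theorem \ref{veb} consist of distinct elements. If $p \mid n$ lies in $\mathcal{P}_{k_i}$, then $p - 1 = d_i k_i \nu$ with $d_i \mid L_i$, and so $p - 1 \mid L_1 L_2 k_1 k_2 \nu$; since condition (ii) of $F_i$ gives $n_j \equiv 1 \pmod{L_1 L_2 k_1 k_2 \nu}$ for both $j$, we obtain $n \equiv 1 \pmod{L_1 L_2 k_1 k_2 \nu}$, hence $p - 1 \mid n - 1$. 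For $K = \nu$, every prime factor of $n$ satisfies $\nu \mid p - 1$ by construction, while Lemma \ref{K=nu} forces any common divisor of $p_1 - 1$ and $p_2 - 1$ with $p_i \in \mathcal{P}_{k_i}$ to divide $\nu$.

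The counting step is then straightforward. The disjoint prime supports make the product map $(n_1, n_2) \mapsto n_1 n_2$ injective, so by Lemma \ref{ni},
\[
C_\nu(X) \;\geq\; |F_1| \cdot |F_2| \;\geq\; z^{2 z^{z+1}\left(\log z - (2 + o(1))\log\log z\right)},
\]
and every such $n$ is bounded by the product of the upper bounds in Lemma \ref{ni}, namely $X$. Taking logarithms and using $\log X = 2 z^{z+1}\left(\log^2 z + (2A + o(1))\log\log z\right)$ yields
\[
\frac{\log C_\nu(X)}{\log X} \;\geq\; 1 - (2 + o(1))\,\frac{\log\log z}{\log z},
\]
and the lookup table translates $\log\log z / \log z$ into $(1 + o(1))\log\log\log\log X / \log\log\log X$, giving the claimed exponent.

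Essentially all the delicate work is already absorbed into the earlier sections: the coprimality conditions between $k_1$, $k_2$, $\nu$, and $L_1 L_2$ are what make Lemma \ref{K=nu} hold, and Conjecture 1 is what keeps the $k_i$ small enough for Lemma \ref{ni} to produce sets of the stated size. The only thing left at this stage is bookkeeping---checking that Korselt's criterion survives the product $n_1 n_2$, that the product map is injective, and that the change of variables from $z$ to $X$ loses nothing outside the $o(1)$.
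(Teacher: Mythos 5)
Your proposal matches the paper's proof in every essential respect: form $n = n_1 n_2$ with $n_i \in F_i$, verify Korselt's criterion via $p-1 \mid \nu L_1 L_2 k_1 k_2 \mid n-1$, invoke Lemma~\ref{K=nu} to get $K = \nu$, and count the products using Lemma~\ref{ni} together with the $z$-to-$X$ lookup table. You are in fact slightly more explicit than the paper on a few small points (squarefreeness of $n$, disjointness of the supports of $\mathcal{P}_{k_1}$ and $\mathcal{P}_{k_2}$, and injectivity of the product map), but these are implicit in the paper's argument and your additions are correct.
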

\begin{proof}
From Lemma \ref{ni}, we can construct many $n_1$ and $n_2$ that are 1 modulo $L_1L_2k_1k_2\nu$.  So let $n=n_1n_2$.  Clearly, if $p|n$ then either $p|n_1$, in which case $p-1|\nu L_1k_1$, or else $p|n_2$, in which case $p-1|\nu L_2k_2$.  In either case, $p-1|\nu L_1L_2k_1k_2|n-1$.  So $n$ is a Carmichael number.  Moreover, by Lemma \ref{K=nu}, we know that $K=\nu$ for this choice of $n$.

To find the number of such $n\leq X$, we recall that there are at least
$$z^{z^{z+1}\left(\log z-(2+o(1))\log\log z\right)}$$
choices for $n_1$ with $n_1\leq \sqrt X$, and the same lower bound holds for the number of choices of $n_2$ with $n_2\leq \sqrt X$.  So the number of $n=n_1n_2$ with $n\leq X$ is at least
$$z^{2z^{z+1}\left(\log z-(2+o(1))\log\log z\right)}.$$
This number can be rewritten as
\begin{align*}
z^{2z^{z+1}\left(\log z-(2+o(1))\log\log z\right)}=&z^{2z^{z+1}\left(\log z+2A\frac{\log \log z}{\log z}-(2+o(1))\log\log z\right)}\\
=&Xz^{-2z^{z+1}\left(2+o(1)\log\log z\right)}\\
=& X\left(X^{-(2+o(1))\frac{\log\log z}{\log z+2A\frac{\log\log z}{\log z}}}\right)\\
=& X\left(X^{-(2+o(1))\frac{\log\log z}{\log z}}\right).\\
\end{align*}
Recalling that $\log z=(1+o(1))\log\log\log X$ and $\log\log z=(1+o(1))\log\log\log\log X$, we can write the above as
$$=X\left(X^{-(2+o(1))\frac{\log\log \log \log X}{\log \log\log X}}\right).$$
This proves the theorem.
\end{proof}

\section{Acknowledgements}

We would like to thank Jonathan Webster for asking a question that prompted the writing of this paper.  We also wish to thank an anonymous referee for some very helpful suggestions.  Additionally, we are grateful for a Wofford College Summer Grant that funded this work.

\bibliographystyle{line}
%\bibliography{JAMS-paper}

\begin{thebibliography}{[HD]}
\normalsize
\bibitem[1]{AGP} W. R. Alford, A. Granville, and C. Pomerance, There are infinitely many Carmichael numbers, Ann. of Math. (2), 139(3) (1994), 703--722.
\bibitem[2]{BP} W. D. Banks and C. Pomerance. On Carmichael numbers in arithmetic progressions. J. Aust.
Math. Soc., 88(3) (2010) 313–-321.
\bibitem[3]{Ca} R. D. Carmichael, Note on a new number theory function, Bull. Amer. Math. Soc. 16 (1910), 232--238.
\bibitem[4]{Ch} J. Chernick, On Fermat's simple theorem, Bull. Amer. Math. Soc. 45 (1935), 269-274.
\bibitem[5]{EK} P. Van Emde Boas and D. Kruyswijk, A combinatorial problem on finite Abelian groups III,
Zuivere Wisk. (1969) (Math. Centrum, Amsterdam).
\bibitem[6]{EPT} A. Ekstrom, C. Pomerance and D.S. Thakur, Infinitude of elliptic Carmichael numbers,
J. Aust. Math. Soc. 92 (2012) 45--60.
\bibitem[7]{Erd} P. Erd\H{o}s, On pseudoprimes and Carmichael numbers, Publ. Math. Debrecen 4 (1956), 201--206.
\bibitem[8]{GPprime} Andrew Granville and Carl Pomerance. 1990. On the Least Prime in Certain
Arithmetic Progressions, Journal of the London Mathematical Society (2) 41, (1990), 193–-200.
%\bibitem[Ha]{Har13} G. Harman. "Watt's mean value theorem and Carmichael numbers". Int. J. Number Theory. 4 (2): 241–248 (2008).
%\bibitem[Kn]{Kn} W. Knödel, Carmichaelsche Zahlen, Math. Nachr. 9 (1953), 343--350.
\bibitem[9]{Hu} M.N. Huxley, Large values of Dirichlet polynomials, Acta Arith., 26 (1975), 435--444.
%\bibitem[Ha]{Har13} G. Harman. "Watt's mean value theorem and Carmichael numbers". Int. J. Number Theory. 4 (2): 241–248 (2008).
%\bibitem[Kn]{Kn} W. Knödel, Carmichaelsche Zahlen, Math. Nachr. 9 (1953), 343--350.
\bibitem[10]{Ko} A. Korselt, Probl\`eme chinois, L'interm\'edinaire des math\'ematiciens 6 (1899),
142--143.
\bibitem[11]{La} D. Larsen, Bertrand's Postulate for Carmichael Numbers, IMRN 15 (2023), 13072–-13098.
\bibitem[12]{Me} R. Meshulam, An uncertainty inequality and zero subsums, Discrete Math. 84(2) (1990), 197--200.
\bibitem[13]{Po} C. Pomerance,  On the distribution of pseudoprimes, Math. Comp. 37(156) (1981) 587–-593.
\bibitem[14]{RS}J. B. Rosser and L. Schoenfeld, Approximate formulas for some functions of prime numbers, Illinois J. Math. 6 (1962), 64–94.
\bibitem[15]{Si} V. {\u S}imerka, Zbytky z arithmetick\'{e} posloupnosti (On the remainders of an arithmetic progression),
{\u C}asopis pro p{\u e}stov\'{a}ni matematiky a fysiky, 14 (1885), no. 5, 221--225.
\bibitem[16]{WrD} T. Wright, A Conditional Density for Carmichael Numbers, Bull. Aus. Math. Soc., 101(3) (2020), 379--388.
\bibitem[17]{WrF} T. Wright, Factors of Carmichael numbers and an even weaker $k$-tuples conjecture, J. Austral. Math. Soc. 99(3) (2019), 376--384.
\bibitem[18]{WrAP} T. Wright, Infinitely many Carmichael numbers in arithmetic progressions, Bull. Lond. Math. Soc. 45(5) (2013), 943–-952.
\bibitem[19]{WrE} T. Wright, There are infinitely many elliptic Carmichael numbers, Bull. Lond. Math. Soc. 50(5) (2018), 791--800.
\bibitem[20]{WrV} T. Wright, Variants of Korselt’s Criterion, Can. Math. Bull. 58(4) (2015), 869--876.
\end{thebibliography}

\end{document}